\DeclareMathAlphabet{\mathcal}{OMS}{cmsy}{m}{n}
\newtheorem{theorem}{Theorem}[section]
\newtheorem{lem}[theorem]{Lemma}
\newtheorem{prop}[theorem]{Proposition}
\theoremstyle{definition}
\numberwithin{equation}{theorem}
\def\phi{\varphi}
\def\to{\longrightarrow}
\def\dim{\operatorname{dim}}
\def\codim{\operatorname{codim}}
\def\height{\operatorname{height}\,}
\def\rank{\operatorname{rank}}
\def\Hom{\operatorname{Hom}}
\def\Ext{\operatorname{Ext}}
\def\Sing{\operatorname{Sing}}
\def\Spec{\operatorname{Spec}\,}
\def\min{\operatorname{min}}
\def\PP{\mathbb{P}}
\newcommand{\mf}[1]{\mathfrak{#1}}
\def\calF{\mathcal{F}}
\def\calO{\mathcal{O}}
\definecolor{myBlue}{HTML}{0072B2}
\definecolor{myOrange}{HTML}{E69F00}
\definecolor{myVermillion}{HTML}{D55E00}
\definecolor{myGreen}{HTML}{009E73}
\definecolor{babyBlossoms}{HTML}{cc79a7}
\begin{document}
\title{Isomorphisms Between Local Cohomology Modules as Truncations of Taylor Series}

\author{Jennifer Kenkel}
\address{Department of Mathematics, University of Kentucky, 719 Patterson Office Tower, Lexington, KY~40507, USA}
\email{jennifer.kenkel@gmail.com}

\thanks{The author was supported by NSF grant DMS~1246989. }

\begin{abstract}
%%%%%%%%%%%%%%%%%%%%%%%%%%%%%%%%%%%%%%%%%%%%%%%%%%%
Let $R$ be a standard graded polynomial ring that is finitely generated over a field, and let $I$ be a homogenous prime ideal of $R$. Bhatt, Blickle, Lyubeznik, Singh, and Zhang examined the local cohomology of $R/I^t$, as $t$ grows arbitrarily large. Such rings are known as thickenings of $R/I$. We consider $R = \mathbb{F}[X]$ where $\mathbb{F}$ is a field of characteristic 0, $X$ is a $2 \times m$ matrix, and $I$ is the ideal generated by size two minors. We give concrete constructions for the local cohomology modules of thickenings of $R/I$. Bizarrely, these local cohomology modules can be described using the Taylor series of natural log.  
\end{abstract}
\maketitle

%%%%%%%%%%%%%%%%%%%%%%%%%%%%%%%%%%%%%%%%%%%%%%%%%%%
\section{Introduction}
\label{section:introduction}
%%%%%%%%%%%%%%%%%%%%%%%%%%%%%%%%%%%%%%%%%%%%%%%%%%%

\par 

Let $R$ be a graded Noetherian commutative ring and $I$ a homogeneous ideal of $R$. For each integer $t > 1$, the rings $R/I^t$ are referred to as \emph{thickenings} of $R/I$. 
The canonical surjection from $R/I^{t+1}$ to $R/I^{t}$ induces a degree-preserving map on local cohomology modules: 
$$H^k_{\mathfrak{m}} (R/I^{t+1})_j \to  H^k_{\mathfrak{m}} (R/I^{t})_j$$ 
 Our focus is on local cohomology modules supported in the maximal ideal, $\mf{m}$.  
\par In general, much of the work that has been done on local cohomology modules examines whether the module is or is not zero.
 While they are useful, local cohomology modules are defined homologically, and thus are difficult to work with concretely. They tend to be large and, even when derived from simple rings, are rarely explicitly described.
 In a paper of Bhatt, Blickle, Lyubeznik, Singh, and Zhang, the authors examined when the induced maps on local cohomology modules in a fixed degree are isomorphisms for large values of $t$ \cite{BBLSZ}. Their work is part of a growing movement towards studying how local cohomology modules behave \cite{BBLSZ, DaoAndMontano, DaoAndMontano2, DeStefani}.
 \par In this paper, we will build on the work of \cite{BBLSZ}. We consider the induced maps on local cohomology modules in the case of a determinantal ring.  Consider a power series with variables in the ideal $I$. In the ring $R/I^t$, such an infinite series behaves like a finite sum, as all but finitely many terms are zero. We use this idea to understand isomorphisms between local cohomology modules as truncations of a power series. In particular, we show that in characteristic 0, $H^3_{\mf{m}}(R/I^t)_0$ is generated as a vector space over the base field by an element represented by the Taylor series of natural log.  This gives an explicit description of the isomorphisms guaranteed in \cite{BBLSZ}. We then construct elements of the same local cohomology module over a field of characteristic $p>0$, and show that the rank of $H^3_{\mf{m}}(R/I^t)_0$ grows arbitrarily large along an infinite subsequence of natural numbers $t$. 
 \par This paper is structured as follows: 
\begin{enumerate}
	\item In Section \ref{BBLSZapplies!}, we set the scene by introducing a determinantal variety over a field of characteristic 0.  We recall the results of \cite{BBLSZ} and verify this variety satisfies the hypotheses there given. 
	\item In Section \ref{Section:RankIsOne}, we introduce our protagonist, the local cohomology module of thickenings of a determinantal variety. We show that, in degree 0, the vector space rank of this module is 1.  
	\item In Section \ref{detlIsoSection}, we explicitly construct an element of the local cohomology described in Section \ref{Section:RankIsOne}. Based on our results from Section \ref{Section:RankIsOne}, we are ensured that we have a basis for this module as a vector space. 
	\item In Section \ref{charPSection}, we consider the same variety over a field of positive characteristic.
	We explicitly construct elements of the module $H^3_{\mf{m}}(R/I^t))_0$ for all $t>1$. 
	In that context, isomorphisms do not exist.
\end{enumerate}

\section{Isomorphisms Are Guaranteed to Exist} \label{BBLSZapplies!}
\par 	
For the entirety of this paper, we examine the ring $R= \mathbb{F}[X]$ where $X$ is a $2 \times 3$ matrix of indeterminates and $\mathbb{F}$ is a field: 

$$ R = \mathbb{F}\begin{bmatrix} u & v & w \\ x & y & z \end{bmatrix}. $$ 
Let $I$ be the ideal generated by the size two minors of the matrix $X$, that is, the elements; 
$$  \Delta_1 = vz-wy, \quad \Delta_2 = wx-uz, \quad \Delta_3 = uy-vx  .$$ 
The field $\mathbb{F}$ will be of characteristic 0, except in Section \ref{charPSection}. 
\par In this section, we will repeat the theorem of \cite{BBLSZ} that guarantees an isomorphism between local cohomology modules of thickenings. We will then verify that the ring $R$ and the ideal $I$ satisfy the hypotheses when the cohomological index, $k$, is equal to 3.

\begin{theorem} \cite{BBLSZ}
	\label{BBLSZ:AG}
	Let $X$ be a closed lci subvariety of $\PP^n$ over a field of characteristic 0, defined by a sheaf of ideals $\mathcal{I}$. Let $X_t \subset \mathbb{P}^n$ be the $t$-th thickening of $X$, i.e., the closed subscheme defined by the sheaf of ideals $\mathcal{I}^t$. Let~$\calF$ be a coherent sheaf on $\PP^n$ that is flat along $X$. Then, for each $k<\codim(\Sing X)$, the natural map
	\[
	H^k(X_{t+1},\calO_{X_{t+1}}\otimes_{\calO_{\PP^n}}\calF) \to H^k(X_t,\calO_{X_t}\otimes_{\calO_{\PP^n}}\calF)
	\]
	is an isomorphism for all $t\gg0$.
	In particular, if $X$ is smooth or has at most isolated singular points, then the map above is an isomorphism for $k<\dim X$ and $t\gg0$.
\end{theorem}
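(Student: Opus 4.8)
The plan is to analyze the inverse system $\{H^k(X_t,\calO_{X_t}\otimes\calF)\}_t$ through the filtration of $\calO_{X_t}$ by powers of the ideal sheaf, and to show that this system becomes pro-constant in the stated range of $k$. Pushing everything forward along the closed immersions into $\PP^n$, I would first record that, since $X$ is lci, the conormal sheaf $\calC=\calI/\calI^2$ is locally free (of rank $\codim X$) on the smooth locus $X\setminus\Sing X$, and that there the canonical surjections fit into short exact sequences
\[
0\to \Sym^t\calC\to \calO_{X_{t+1}}\to \calO_{X_t}\to 0 .
\]
Because $\calF$ is flat along $X$, tensoring with $\calF$ preserves exactness, so for every $t$ I obtain a short exact sequence whose graded piece is $\Sym^t\calC\otimes\calF$.

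Next I would pass to the long exact sequence in cohomology. The transition map $H^k(X_{t+1},\calO_{X_{t+1}}\otimes\calF)\to H^k(X_t,\calO_{X_t}\otimes\calF)$ sits between $H^k(X,\Sym^t\calC\otimes\calF)$ and $H^{k+1}(X,\Sym^t\calC\otimes\calF)$, so termwise vanishing of these two groups for $t\gg0$ would immediately yield the isomorphism. Here the hypothesis $k<\codim(\Sing X)$ enters to excise the singular locus: a local-cohomology (depth) estimate shows that $\Sing X$ contributes nothing to $H^k$ and $H^{k+1}$ in this range, so the computation may be performed on the smooth locus, where $\calC$ is a genuine vector bundle and its symmetric powers are well behaved.

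The main obstacle is that the graded groups $H^k(X,\Sym^t\calC\otimes\calF)$ do \emph{not} vanish termwise in general: the normal bundle $\calN=\calC^\vee$ need not be ample, so neither Serre vanishing nor its Serre-dual form is available, and one cannot conclude by inspecting a single sequence. This is exactly where characteristic $0$ must intervene, and it is the crux of the matter. The resolution is to exploit the extra structure the thickenings carry in characteristic $0$: the de Rham differential, built from the conormal sequence $0\to\calC\to\Omega^1_{\PP^n}|_X\to\Omega^1_X\to0$, induces compatible maps relating consecutive symmetric powers $\Sym^t\calC$, and the resulting complex lets one identify $\varprojlim_t H^k(X_t,\calO_{X_t}\otimes\calF)$ with a finite-dimensional de Rham–type invariant of $X$. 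The degeneration of the associated Hodge/de Rham filtration — a genuinely characteristic-$0$ phenomenon — forces the pro-system to be pro-constant, hence the transition maps to be isomorphisms for $t\gg0$, even though the individual graded cohomologies persist.

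Finally I would assemble the pieces: reduce to the smooth locus using the codimension hypothesis, establish pro-constancy via the de Rham mechanism, and then invoke finiteness of cohomology over the noetherian scheme $\PP^n$ to upgrade pro-constancy of the system to an eventual isomorphism of the actual cohomology groups. The special cases follow by specializing the codimension hypothesis: for $X$ smooth, $\Sing X$ is empty, and for $X$ with at most isolated singularities $\codim(\Sing X)=\dim X$, in both situations recovering the range $k<\dim X$. I expect the de Rham degeneration step to be by far the hardest, and it is precisely the step that fails in positive characteristic — consistent with the growing ranks of $H^3_{\mf{m}}(R/I^t)_0$ exhibited later in the paper.
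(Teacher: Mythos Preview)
The paper does not prove this theorem: it is quoted verbatim from \cite{BBLSZ} and used as a black box (the paper then passes immediately to the local-cohomology reformulation in Theorem~\ref{BBLSZ:CA} and verifies only that its running example satisfies the hypotheses). There is therefore no proof in the paper to compare your proposal against.

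That said, your sketch is a faithful outline of the strategy actually used in \cite{BBLSZ}: filter $\calO_{X_t}$ by powers of the ideal sheaf to obtain the graded pieces $\Sym^t\calC\otimes\calF$, use the codimension hypothesis on $\Sing X$ to reduce to the locus where $\calC$ is locally free, and then invoke a characteristic-$0$ degeneration statement (in \cite{BBLSZ} this is Hartshorne's algebraic de Rham theorem together with Hodge--de Rham degeneration for the formal completion) to obtain pro-constancy of the inverse system. You have correctly identified the de Rham/Hodge step as the place where characteristic $0$ is essential and as the part that cannot be replaced by a na\"ive Serre-vanishing argument. Your sketch is at the right level of detail for a proof plan; the substantive work lies in making the ``de Rham mechanism'' precise, which is exactly what \cite{BBLSZ} carries out.
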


Rather than considering this theorem in the setting of sheaf cohomology, as it originally appears, we consider the local cohomology setting:
 
\begin{theorem}\cite{BBLSZ} 
	\label{BBLSZ:CA}
	Let $R$ be a standard graded polynomial ring that is finitely generated over a field~$R_0$ of characteristic 0.  Let $\mf{m}$ be the homogeneous maximal ideal of $R$. Let $I$ be a homogeneous prime ideal such that $R/I$ is a locally complete intersection on the set $\Spec{R} \backslash \{\mf{m}\}$, and let $k$ be a natural number such that $k<\dim(R)-~\height(\Sing R/I)$. Then, for a fixed natural number $j$, the maps between the modules $H^k_{\mf{m}}(R/I^{t+1})_j$ and $H^k_{\mf{m}}(R/I^t)_j$ are isomorphisms for sufficiently large $t$.   
\end{theorem}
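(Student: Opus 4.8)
The plan is to deduce this statement from its projective counterpart, Theorem~\ref{BBLSZ:AG}, by passing through the standard dictionary between graded local cohomology supported at the irrelevant ideal and sheaf cohomology on $\Proj R$. Write $R = R_0[x_0,\dots,x_n]$ with the $x_i$ in degree $1$, so that $\Proj R = \PP^n_{R_0}$ with $n = \dim R - 1$, and set $X = \Proj(R/I) = V(I) \subset \PP^n$ and $X_t = \Proj(R/I^t) = V(I^t)$, the $t$-th thickening of $X$. Under the functor $M \mapsto \tilde M$ carrying a finitely generated graded $R$-module to its associated coherent sheaf, $R/I^t$ goes to $\calO_{X_t}$, and $(R/I^t)(j)$ goes to $\calO_{X_t}(j) = \calO_{X_t} \otimes_{\calO_{\PP^n}} \calO_{\PP^n}(j)$.

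First I would invoke Serre's comparison between local and sheaf cohomology: for a finitely generated graded module $M$ with $\calF = \tilde M$, one has $H^k_{\mf{m}}(M)_j \cong H^{k-1}(\PP^n, \calF(j))$ for all $k \ge 2$, together with a four-term exact sequence relating $H^0_{\mf{m}}(M)$, $H^1_{\mf{m}}(M)$, $M$, and $\bigoplus_j H^0(\PP^n, \calF(j))$ in the remaining degrees. Applied to $M = R/I^t$, this gives for $k \ge 2$ a degree-preserving isomorphism $H^k_{\mf{m}}(R/I^t)_j \cong H^{k-1}(\PP^n, \calO_{X_t}(j))$, and since $X_t \hookrightarrow \PP^n$ is a closed immersion the right-hand side equals $H^{k-1}(X_t, \calO_{X_t} \otimes \calF)$ with $\calF = \calO_{\PP^n}(j)$. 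The essential point, which I would check carefully, is that this identification is \emph{natural} in $t$: the canonical surjection $R/I^{t+1} \onto R/I^t$ induces $\calO_{X_{t+1}} \to \calO_{X_t}$, and under Serre's isomorphism the transition map on local cohomology is carried to the transition map on sheaf cohomology.

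With the dictionary in place, I would apply Theorem~\ref{BBLSZ:AG} to the flat (indeed locally free) sheaf $\calF = \calO_{\PP^n}(j)$. Two hypotheses must be matched. For the local complete intersection condition, $R/I$ being lci on $\Spec R \setminus \{\mf{m}\}$ says exactly that the affine cone over $X$ is lci away from its vertex, which is equivalent to $X \subset \PP^n$ being lci. For the numerical condition, the bound $k < \dim R - \height(\Sing R/I)$ is the cone-theoretic translation of the geometric bound $k - 1 < \codim(\Sing X)$ appearing in Theorem~\ref{BBLSZ:AG}, once one accounts for the degree shift $k \mapsto k-1$ in Serre's comparison and for the relationship between the singular locus of $R/I$ and that of its $\Proj$. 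Granting the match, Theorem~\ref{BBLSZ:AG} yields that $H^{k-1}(\PP^n, \calO_{X_{t+1}}(j)) \to H^{k-1}(\PP^n, \calO_{X_t}(j))$ is an isomorphism for $t \gg 0$, whence, by naturality, so is $H^k_{\mf{m}}(R/I^{t+1})_j \to H^k_{\mf{m}}(R/I^t)_j$.

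I expect the main obstacle to be twofold. The first is the naturality of Serre's comparison isomorphism with respect to the thickening maps; although standard, it must be verified so that the two towers of transition maps genuinely agree. The second, and more delicate, is the exact bookkeeping in the translation of the numerical hypothesis: one must track the degree shift together with the behaviour at the vertex of the affine cone in order to confirm that $k < \dim R - \height(\Sing R/I)$ is the correct algebraic shadow of $k - 1 < \codim(\Sing X)$. Finally, the low cohomological degrees $k \in \{0,1\}$, where Serre's comparison is replaced by the four-term exact sequence, would be treated separately; there the statement is either vacuous in the given range of $k$ or follows directly from depth considerations for $R/I^t$.
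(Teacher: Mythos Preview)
The paper does not give its own proof of this statement: Theorem~\ref{BBLSZ:CA} is simply quoted from \cite{BBLSZ} as the local-cohomology reformulation of Theorem~\ref{BBLSZ:AG}, with no argument supplied beyond the sentence ``Rather than considering this theorem in the setting of sheaf cohomology, as it originally appears, we consider the local cohomology setting.'' So there is nothing in the paper to compare your argument against.

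That said, your proposal is exactly the standard translation one would carry out, and is presumably what \cite{BBLSZ} does: pass from $H^k_{\mf{m}}(R/I^t)_j$ to $H^{k-1}(\PP^n,\calO_{X_t}(j))$ via the Serre comparison, check naturality in $t$, take $\calF=\calO_{\PP^n}(j)$ in Theorem~\ref{BBLSZ:AG}, and match the lci and codimension hypotheses under the cone--$\Proj$ dictionary. Your identification of the two points needing care (naturality of the comparison isomorphism and the bookkeeping between $k<\dim R-\height(\Sing R/I)$ and $k-1<\codim(\Sing X)$) is accurate. For the purposes of this paper, however, the theorem is treated as a black box, so no proof is expected here.
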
 

We first determine in which indices the hypotheses of Theorem \ref{BBLSZ:CA} apply. 
For a general treatment of determinantal rings, see \cite{DetRings}. In particular, Proposition 1.1 of \cite{DetRings} gives that the ring $R/I$ has dimension 4, and Theorem 2.6 gives that the localization of $R/I$ at a prime ideal $ \mf{p} $ is regular if and only if $\mf{p}$ is not equal to the maximal ideal $\mf{m}$. 

Since $R/I$ is regular when localized away from the maximal ideal, the ring $R/I$ is a locally complete intersection on the punctured spectrum. On the other hand, $(R/I)_{\mf{m}}$ is not regular, so we have that $\Sing(R/I) = { \mf{m} }$ and thus $\height \left( \Sing(R/I) \right)$ is the dimension of $R/I$, which is 4. Therefore Theorem \ref{BBLSZ:CA} applies for cohomological indices $k \leq 3$.

\section{The rank of $H^{3}_{\mf{m}}(R/I^t)_0$ is one}\label{Section:RankIsOne}

\par Hochster and Eagon showed that determinantal rings are Cohen-Macaulay
in \cite{HochsterAndEagon}, specifically, in our case, the local cohomology modules $H^k_{\mf{m}}(R/I)$ are zero at every cohomological index  $k \neq 4$.  
However, the successive thickenings, $R/I^t$, are not Cohen-Macaulay for all $t$ greater than 1; see \cite{DEP}. 

As is stated in Proposition 7.24 of \cite{DetRings}, the depth of the ring $R/I^t$ is at least three for all $t$. Since the rings $R/I^t$ are not Cohen-Macaulay for all $t \geq 2$, this implies that the depth of $R/I^t$ must be exactly three. Therefore the module $H^3_{\mf{m}}(R/I^t)$ is nonzero for all thickenings with $t \geq 2$.  As we shall see, $H^3_{\mf{m}}(R/I^t)_0$ is a rank 1 vector space over $\mathbb{F}$ for each $t \geq 2$. Towards proving this, we first recall the following theorem.
\begin{theorem} \label{LSW} \cite{LSW} 
	Let $R=\mathbb{Z}[X]$ be a polynomial ring, where $X$ is an $m \times n$ matrix of indeterminates. Let $I_d$ be the ideal generated by the size $d$ minors of $X$. If $2 \leq d \leq \min ( m, n )$ and $d$ differs from at least one of $m$ and $n$, then there exists a degree-preserving isomorphism 
	$$H^{mn-d^2+1}_{I_d}(\mathbb{Z}[X]) \cong H^{mn}_{\mf{m}}(\mathbb{Q}[X]).$$ 
\end{theorem}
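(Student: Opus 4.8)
The plan is to recognize the left-hand module as the \emph{top} nonvanishing local cohomology supported at $I_d$ and then to show that, over a field of characteristic $0$, that top module is forced to be the graded injective hull of the residue field. Write $c = mn - d^2 + 1$ and let $\mf{m}$ be the ideal generated by all entries of $X$. First I would bound the cohomological dimension from above by the arithmetic rank: by the theorem of Bruns and Schwänzl, the determinantal variety $V(I_d)$ is cut out set-theoretically by $mn - d^2 + 1$ equations, so $\operatorname{cd}(R,I_d) \le \operatorname{ara}(I_d) = c$ and hence $H^j_{I_d}(R) = 0$ for $j > c$. This bound is characteristic-free, and it reduces the problem to computing the single module $H^c_{I_d}(R)$.

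Next I would pass to a field $K$ of characteristic $0$ and study $H^c_{I_d}(K[X])$ through the action of $\GL_m(K) \times \GL_n(K)$ on the space of matrices. Since $I_d$ is stable, every $H^j_{I_d}(K[X])$ is an equivariant holonomic $\mathcal{D}$-module (local cohomology modules are holonomic in characteristic $0$), and the group acts with finitely many orbits, the rank strata. The decisive step is to prove that $H^c_{I_d}(K[X])$ is supported only at the origin. To see this, localize at a prime $\mf{p} \in V(I_d)\setminus\{\mf{m}\}$, given by a matrix of some rank $r$ with $0 < r \le d-1$. By $\GL$-homogeneity, after invertible row and column operations near $\mf{p}$ the matrix is block diagonal with an invertible $r\times r$ block and a generic $(m-r)\times(n-r)$ block $X'$, so $(I_d)_{\mf{p}}$ becomes $I_{d-r}(X')$ up to a smooth factor. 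Its cohomological dimension is $(m-r)(n-r) - (d-r)^2 + 1$, and a short computation gives
\[
\bigl[(m-r)(n-r) - (d-r)^2 + 1\bigr] - c = r\,(2d - m - n).
\]
Here I invoke the hypothesis that $d$ differs from at least one of $m$ and $n$: together with $d \le \min(m,n)$ it forces $2d < m+n$, so the displayed difference is strictly negative for every $r>0$. Thus $\bigl(H^c_{I_d}(K[X])\bigr)_{\mf{p}} = 0$, and $H^c_{I_d}(K[X])$ is supported at $\{\mf{m}\}$.

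By Kashiwara's equivalence, a holonomic $\mathcal{D}_{K[X]}$-module supported at the closed origin is a finite direct sum of copies of the unique simple such module, namely $E := H^{mn}_{\mf{m}}(K[X])$. Hence $H^c_{I_d}(K[X]) \cong E^{\oplus \mu}$ for some $\mu \ge 1$, and because every construction is equivariant the isomorphism may be taken to respect the standard $\ZZ$-grading induced by the scalar $\mathbb{G}_m$ inside $\GL$. To pin down $\mu = 1$ I would compute one graded strand, e.g.\ the socle: that of $E$ is one-dimensional and sits in degree $-mn$, so matching socle degrees yields the degree-preserving isomorphism over $K$.

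Finally I would upgrade to the integral statement. Flat base change along $\ZZ \to \QQ$ gives $H^c_{I_d}(\QQ[X]) \cong H^c_{I_d}(\ZZ[X]) \otimes_{\ZZ} \QQ$, so it remains to see that $H^c_{I_d}(\ZZ[X])$ is already a $\QQ$-vector space, i.e.\ is $p$-torsion-free and $p$-divisible for every prime $p$. This is where I expect the real difficulty to lie: while the Bruns–Schwänzl bound holds integrally, the support computation above uses holonomicity and is genuinely a characteristic-$0$ argument, and in characteristic $p$ the lower modules $H^{<c}_{I_d}$ can be far larger — precisely the asymmetry exploited in Section~\ref{charPSection}. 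Showing that these characteristic-$p$ phenomena never propagate up to cohomological degree $c$ is the main obstacle, and I would attack it by comparing the reductions mod $p$ of the relevant \v{C}ech or Koszul complexes against their characteristic-$0$ counterparts to rule out $p$-primary contributions at the top spot.
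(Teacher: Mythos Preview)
The paper does not prove this theorem; it is quoted from \cite{LSW} and used as a black box (see the citation marker in the theorem header and the immediate application in Equation~\ref{LSW:Ex}). So there is no ``paper's own proof'' to compare against.

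That said, your outline is close in spirit to the actual argument in \cite{LSW}: bound $\operatorname{cd}$ by Bruns--Schw\"anzl, use $\mathcal D$-module/holonomicity arguments in characteristic $0$ to force support at the origin, invoke Kashiwara to identify the top module as copies of $E=H^{mn}_{\mf m}(K[X])$, and then address the arithmetic of $\ZZ$ versus $\QQ$. Two places in your sketch are genuinely incomplete. First, you assert $\mu=1$ by ``matching socle degrees,'' but knowing the socle of $E$ sits in degree $-mn$ does not by itself compute the multiplicity; you still need an honest calculation of one graded piece of $H^{c}_{I_d}(K[X])$ (or of its Lyubeznik number $\lambda_{0,mn}$), and that step is where the real content lies over a field. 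Second, you openly flag the passage from $\QQ[X]$ to $\ZZ[X]$ as the hard part and only propose a line of attack; in \cite{LSW} this is handled by a careful analysis showing the integral module has no $p$-torsion and is $p$-divisible, and your \v Cech/Koszul comparison idea would have to be made precise to get there. In short: the strategy is right, but both the multiplicity-one step and the integral upgrade need actual arguments, not just pointers.
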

%\begin{example} 
	 Applying Theorem \ref{LSW} when $X$ is an $2 \times 3$ matrix of indeterminates and $I$ is the ideal generated by size $2$ minors of $X$ gives: 
	\begin{equation} \label{LSW:Ex} 
	H^3_{I}(\mathbb{Z}[X]) \cong H^6_{\mf{m}}(\mathbb{Q}[X])
	\end{equation} 
%\end{example} 
\par We are considering the ring $\mathbb{F}[X]$ where $\mathbb{F}$ is some field of characteristic 0, not the ring $\mathbb{Z}[X]$. However, we claim the above isomorphism implies that the modules $H^3_I(\mathbb{F}[X])$ and $H^6_{\mf{m}}(\mathbb{F}[X])$ are isomorphic. To see this, first tensor both sides of Equation \ref{LSW:Ex} with the module $\mathbb{F}[X]$ to get 
\begin{equation} \label{littlePaperTensor}
\mathbb{F}[X] \otimes_{\mathbb{Z}[X]}  H^{3}_{I}(\mathbb{Z}[X]) \cong \mathbb{F}[X] \otimes_{\mathbb{Z}[X]} H^{6}_{\mathfrak{m}}(\mathbb{Q}[X]) . 
\end{equation} 
Note that, since $\mathbb{F}$ is a field of characteristic 0, the module $\mathbb{F}[X]$ is flat over $\mathbb{Z}[X]$.  

\begin{lem}  \cite{24hours} \label{tensoring} Let $I$  be an ideal of a ring $R$, and let $M$ be an $R$-module.  Then if $R \to S$ is flat, there is a natural isomorphism of $S$-modules 
	\begin{equation*}   S \otimes_R H^j_{I} (M) \cong H^j_{IS}(S \otimes_R M) .
	\end{equation*} \end{lem}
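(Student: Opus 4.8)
The plan is to compute both sides of the claimed isomorphism with the \v{C}ech (stable Koszul) complex, which computes local cohomology and interacts well with both localization and flat base change. First I would choose a finite generating set $f_1, \dots, f_r$ of $I$, so that $IS$ is generated by the images $\bar f_1, \dots, \bar f_r$ of the $f_i$ in $S$. Recall that $H^j_I(M)$ is the $j$-th cohomology of the \v{C}ech complex $\check C^\bullet(f_1,\dots,f_r; M)$, whose term in cohomological degree $k$ is $\bigoplus_{i_1 < \cdots < i_k} M_{f_{i_1} \cdots f_{i_k}}$, a finite direct sum of localizations of $M$.

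The first key step is to identify the complex obtained by applying $S \otimes_R (-)$ to this \v{C}ech complex. For any element $f \in R$ there is a natural isomorphism $S \otimes_R M_f \cong (S \otimes_R M)_{\bar f}$: indeed $M_f \cong R_f \otimes_R M$ and $S \otimes_R R_f \cong S_{\bar f}$, so that $S \otimes_R M_f \cong S_{\bar f} \otimes_R M \cong S_{\bar f} \otimes_S (S \otimes_R M) \cong (S \otimes_R M)_{\bar f}$. These isomorphisms are compatible with the \v{C}ech differentials, which are alternating sums of the canonical localization maps, so $S \otimes_R \check C^\bullet(f_1, \dots, f_r; M)$ is naturally isomorphic, as a complex of $S$-modules, to the \v{C}ech complex $\check C^\bullet(\bar f_1, \dots, \bar f_r; S \otimes_R M)$ that computes $H^\bullet_{IS}(S \otimes_R M)$.

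The second key step invokes flatness directly: because $R \to S$ is flat, the functor $S \otimes_R (-)$ is exact and therefore commutes with the formation of cohomology of a complex. Combining this with the identification above yields the chain
$$S \otimes_R H^j_I(M) \cong H^j\big(S \otimes_R \check C^\bullet(f_\bullet; M)\big) \cong H^j\big(\check C^\bullet(\bar f_\bullet; S \otimes_R M)\big) \cong H^j_{IS}(S \otimes_R M),$$
which is exactly the asserted isomorphism of $S$-modules.

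I do not expect any single step to involve a deep computation; the substance of the argument is that flat base change commutes with the two exact-functor operations (localization and passage to cohomology) out of which local cohomology is built. Accordingly, I anticipate that the most delicate point will be the bookkeeping behind \emph{naturality}: verifying that the base-change isomorphisms $S \otimes_R M_f \cong (S \otimes_R M)_{\bar f}$ genuinely assemble into a map of complexes intertwining the \v{C}ech differentials, that the induced isomorphism on cohomology is natural in $M$, and that it is independent of the chosen generators of $I$. The last independence follows because the \v{C}ech complex is, up to quasi-isomorphism, a model for the derived $I$-torsion functor; once the naturality of the flat base-change isomorphism for localization is established, the remainder is formal.
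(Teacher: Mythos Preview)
Your argument is correct and is the standard \v{C}ech-complex proof of this flat base-change lemma. Note, however, that the paper does not supply its own proof of this statement: the lemma is quoted with a citation to \cite{24hours}, and the paper simply invokes it. So there is no ``paper's proof'' to compare against beyond the reference, and your write-up is exactly the argument one finds there.
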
 
\noindent Applying Lemma \ref{tensoring} to Equation \ref{littlePaperTensor} gives that 
\begin{align*} \mathbb{F}[X] \otimes_{\mathbb{Z}[X]} H^3_{I}(\mathbb{Z}[X]) &\cong H^3_{I\mathbb{F}[X]}(\mathbb{F}[X] \otimes_{\mathbb{Z}[X]} \mathbb{Z}[X])\\ 
&\cong H^3_I (\mathbb{F}[X]). 
\end{align*} 
Similarly 
\begin{align*} 
\mathbb{F}[X] \otimes_{\mathbb{Z}[X]} H^{6}_{\mathfrak{m}}(\mathbb{Q}[X]) &\cong H^6_{\mf{m}}(\mathbb{F}[X] \otimes_{\mathbb{Z}[X]} \mathbb{Q}[X] ) 
\\ &\cong H^6_{\mf{m}}(\mathbb{F}[X]).
\end{align*}  
\begin{prop} \label{rankOne} The module $H^3_{\mf{m}}(R/I^t)_0$ is an $\mathbb{F}$-vector space of rank 1 for all $t$. \end{prop}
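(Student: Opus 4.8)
The plan is to dualize and then exploit the isomorphism $H^3_I(R)\cong H^6_{\mf m}(R)$ established above. Since $R=\mathbb{F}[X]$ is a polynomial ring in six variables with canonical module $R(-6)$, graded local duality supplies, for every $t$, an isomorphism of $\mathbb{F}$-vector spaces
\[
H^3_{\mf m}(R/I^t)_0 \;\cong\; \big(\Ext^3_R(R/I^t,R)_{-6}\big)^{*},
\]
so it suffices to prove that $\Ext^3_R(R/I^t,R)_{-6}$ is one-dimensional for all $t\ge 2$. These spaces are precisely the terms of the direct system whose colimit is local cohomology with support in $I$: writing $W_t=\Ext^3_R(R/I^t,R)_{-6}$, we have $\varinjlim_t W_t = H^3_I(R)_{-6}$, the transition maps $W_t\to W_{t+1}$ being those induced by the surjections $R/I^{t+1}\onto R/I^t$.

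First I would identify the colimit. The top local cohomology $H^6_{\mf m}(R)$ is spanned by the inverse monomials $u^{-a}v^{-b}w^{-c}x^{-d}y^{-e}z^{-f}$ with all exponents at least $1$, placed in degree $-(a+b+c+d+e+f)$; hence its degree $-6$ strand is one-dimensional, spanned by the class of $1/(uvwxyz)$. Because the isomorphism $H^3_I(R)\cong H^6_{\mf m}(R)$ established above is degree-preserving, $\varinjlim_t W_t=H^3_I(R)_{-6}$ is one-dimensional. This fixes the stable value of the system and is consistent with Theorem~\ref{BBLSZ:CA}, which already guarantees that the transition maps are isomorphisms for $t\gg0$.

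Next I would control the individual terms. The long exact $\Ext$-sequence attached to $0\to I^t/I^{t+1}\to R/I^{t+1}\to R/I^t\to 0$ shows that the kernel of $W_t\to W_{t+1}$ is a quotient of $\Ext^2_R(I^t/I^{t+1},R)_{-6}$, which by local duality is dual to $H^4_{\mf m}(I^t/I^{t+1})_0$. If this group vanishes for all $t$, then every transition map is injective, so each $W_t$ embeds into the one-dimensional colimit and therefore has dimension at most one. For the reverse inequality it suffices to verify the single base case $W_2\ne 0$ (equivalently $H^3_{\mf m}(R/I^2)_0\ne 0$): injectivity of the transition maps then propagates nonvanishing to every $t\ge 2$, and the squeeze $0\ne W_2\hookrightarrow\cdots\hookrightarrow\varinjlim_t W_t\cong\mathbb{F}$ forces $\dim_{\mathbb{F}}W_t=1$ for all $t\ge 2$.

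The hard part will be the vanishing $H^4_{\mf m}(I^t/I^{t+1})_0=0$. Here $H^4_{\mf m}(-)$ is the top local cohomology of $I^t/I^{t+1}$ viewed as a module over the four-dimensional ring $R/I$, and the layer $I^t/I^{t+1}$ is generated in the strictly positive degree $2t$. The expected mechanism is that, since the associated graded ring $\operatorname{gr}_I R$ of this determinantal ideal is Cohen-Macaulay, each layer $I^t/I^{t+1}$ is a Cohen-Macaulay $R/I$-module whose canonical module is generated in negative degrees, so the $a$-invariant is negative and the degree-zero part of the top local cohomology is zero. Pinning down the graded canonical modules of the layers $I^t/I^{t+1}$ — using the determinantal structure theory of \cite{DetRings} and the Cohen-Macaulayness of the blowup — is where the real work lies; the base case $W_2\ne 0$ is then a finite computation, for instance by exhibiting a nonzero class along the lines of Section~\ref{detlIsoSection}.
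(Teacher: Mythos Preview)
Your opening moves---graded local duality to pass from $H^3_{\mf m}(R/I^t)_0$ to $\Ext^3_R(R/I^t,R)_{-6}$, identification of the colimit $\varinjlim_t W_t$ with $H^3_I(R)_{-6}$, and the use of the isomorphism $H^3_I(R)\cong H^6_{\mf m}(R)$ to see this is one-dimensional---are exactly what the paper does. Where you diverge is in how you descend from the colimit to the individual $W_t$. The paper takes a much shorter path: it simply invokes Theorem~\ref{BBLSZ:CA}, which says the maps $H^3_{\mf m}(R/I^{t+1})_0\to H^3_{\mf m}(R/I^t)_0$ are isomorphisms for $t\gg 0$; dually the $W_t\to W_{t+1}$ are eventually isomorphisms, so the stable rank equals the rank of the colimit, namely $1$. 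That is the entire argument---no analysis of the layers $I^t/I^{t+1}$, no base case, no injectivity of transition maps.

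Your route is more ambitious and, if the vanishing $H^4_{\mf m}(I^t/I^{t+1})_0=0$ can be verified, genuinely stronger: it would give $\dim_{\mathbb F}W_t=1$ for \emph{every} $t\ge 2$, whereas the paper's argument as written only delivers $t\gg 0$ (despite the proposition's phrasing). The cost is that the vanishing step is real work; your $a$-invariant heuristic via Cohen--Macaulayness of $\operatorname{gr}_I R$ is plausible but not a proof, and you rightly flag it as the crux. If your goal is simply to match the paper, you can drop the entire injectivity discussion and replace it with a one-line appeal to Theorem~\ref{BBLSZ:CA}; if instead you want the sharper ``all $t\ge 2$'' statement, you will indeed have to carry out the canonical-module computation you outline, together with the base case $W_2\neq 0$.
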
 
\begin{proof} 
	\par 
	Let $\omega_R$ denote the canonical module of $R$ and let $E_R(R/\mf{m})$ denote the injective hull of the residue field, $R/\mf{m}$. As $R$ is a Gorenstein ring of dimension 6, the canonical module $\omega_R$ is isomorphic to $R(-6)$. The injective hull $E_R(R/\mf{m})$ is isomorphic to $H^6_{\mf{m}}(R)(-6)$. 

	\index{Local duality} Local duality gives that 
	$$H^3_{\mf{m}}(R/I^t)^{\vee} \cong \Ext^{3}_R(R/I^t, \omega_R), $$
	where $(-)^{\vee}$ indicates $\Hom_{R}(-, E_R(R/\mf{m}))$. 
	\par An $R$-homomorphism in $\Hom_R(H^3_{\mf{m}}(R/I^t), E_R(R/\mf{m}))_j$ is determined by the preimage of elements in $E_R(R/\mf{m})$ of degree zero. Therefore the rank of $\Hom_R(H^3_{\mf{m}}(R/I^t), E_R(R/\mf{m}))_j$ is equal to the rank of $H^3_{\mf{m}}(R/I^t)_{-j}$.  
	We thus have 
	\begin{align*} 
	\rank  H^3_{\mf{m}}(R/I^t)_j &= \rank \left( H^3_{\mf{m}}(R/I^t)^{\vee}\right)_{-j} \\
	&= \rank( \left(  \Ext^{3}_R(R/I^t, \omega_R) \right)_{-j} ) \\
	&= \rank ( \left ( \Ext^{3}_R(R/I^t, R(-6)) \right)_{-j} \\
	&= \rank ( \left ( \Ext^{3}_R(R/I^t, R) \right)_{-j-6} ,
	\end{align*} 
	i.e., the rank of $H^3_{\mf{m}}(R/I^t)_j$ is the same as the rank of $\Ext^{3}_R(R/I^t, R)_{-(j+6)}$. 
	Note that one can define a different local cohomology module as the direct limit of these $\Ext$-modules : 
	$$ H^3_I(R) = \lim\limits_{t \to \infty }\Ext^{3}_R(R/I^t, R).$$ 
	Since Theorem~\ref{BBLSZ:CA} guarantees an eventual isomorphism, the rank of the module  $H^3_{\mf{m}}(R/I^t)_j$  must equal the rank of $H^3_I(R)_{j+6}$ for sufficiently large $t$. 
	Therefore, we can compute the rank of $H^3_{\mf{m}}(R/I^t)_j$ as an $\mathbb{F}$-vector space by instead calculating the rank of $H^3_I(R)$ with a degree shift: 
	\begin{equation} \label{rankOfThickening} 
	\rank H^3_{\mf{m}}(R/I^t)_j = \rank H^3_I(R)_{-j-6} \end{equation}
	for sufficiently large $t$.   

	Recall that Equation~\ref{littlePaperTensor} gave $H^3_I(\mathbb{F}[X]) \cong H^6_{\mf{m}}(\mathbb{F}[X])$ as graded modules. 
	Therefore, the rank of $H^3_I(\mathbb{F}[X])$ in any degree is equal to the rank of  $H^6_{\mf{m}}(\mathbb{F}[X])$ in that degree. Since the rank of $H^3_{\mf{m}}(R/I^t)_j$ equals the rank of $H^3_I(R)_{-j-6}$ from Equation~\ref{rankOfThickening}, we have 
	$$ \rank H^3_{\mf{m}}(R/I^t)_j = \rank H^6_{\mf{m}}(R)_{-j-6}$$
	In particular, when $j=0$ 
	$$ \rank H^3_{\mf{m}}(R/I^t)_0 = \rank H^6_{\mf{m}}(R)_{-6}.$$
	As $R$ is a regular ring in six variables, the top local cohomology module is well-understood: 
	$$H^6_{\mf{m}}(R) = \frac{1}{uvwxyz} \mathbb{F}[ u^{-1}, v^{-1}, w^{-1}, x^{-1}, y^{-1}, z^{-1}].$$ 
	It follows from the above description that the rank of $H^6_{\mf{m}}(R)_{-6}$ equals 1 as an $\mathbb{F}$-vector space. 
\end{proof}
\section{An Element of Local Cohomology Is Described Using Natural Log} \label{detlIsoSection} 

One can determine the module $H^3_{\mf{m}}(R/I^t)_0$ by considering elements in the \v{C}ech complex on the generators 
$${\bf x}=u, v, w, x, y, z $$ 
of the maximal ideal, $\mf{m}$. As $R/I$ is a four dimensional ring, the maximal ideal $\mf{m}$ could be generated up to radical by only four elements. However, we chose to use six variables, for the sake of symmetry.
\par Thus, we expect to find an element of $\check{C}^{3}({\bf x},R/I^t)$, that is, an element of
$$(R/I^t)_{uvw} \oplus  (R/I^t)_{uvx} \oplus (R/I^t)_{uvy} \oplus \hdots \oplus (R/I^t)_{xyz} $$ that maps to $0$ in $\check{C}^4({\bf x}, R/I^t)$ but is not in the image of $\check{C}^2(\textbf{x},R/I^t)$. Note that the presence of only two variables in the denominator of a particular component of $\check{C}^3(\textbf{x}, R/I^t)$ does not guarantee that the element is in the image of $\check{C}^2(\textbf{x}, R/I^t)$, as we shall see.

Surprisingly, the isomorphism $H^3_{\mf{m}}(R/I^{t+1})_0 \to H^3_{\mf{m}}(R/I^t)_0$ can be elegantly understood in terms of truncations of the formal power series of natural log.  
\begin{theorem}	\label{log} Let $R=\mathbb{F}\begin{bmatrix}
	u & v & w \\ x & y & z
	\end{bmatrix}$, i.e.,  $R$ is the ring of polynomials in 6 indeterminates over a field of characteristic 0. Let $I$ be the ideal generated by size two minors of $\begin{bmatrix} u & v & w \\ x& y & z \end{bmatrix}$ and let 
	$$  \Delta_1 = vz-wy, \quad \Delta_2 = wx-uz, \quad \Delta_3 = uy-vx .$$
	Then the identity $$ \ln\left(\frac{wy}{vz} \frac{uz}{wx} \frac{vx}{uy}\right) = \ln(1) = 0$$ gives the following identity in the fraction field of the $I$-adic completion of $R$:  
	$$ \displaystyle\sum\limits_{m=1}^{\infty} \frac{1}{m} \left(\frac{\Delta_1}{vz}\right)^m  
	+ \sum\limits_{m=1}^{\infty} \frac{1}{m} \left(\frac{\Delta_2}{wx}\right)^m
	+ \sum\limits_{m=1}^{\infty} \frac{1}{m} \left(\frac{\Delta_1}{uy}\right)^m = 0 .$$ 
	The $t^{th}$ truncation of this Taylor series yields the generator for $H^3_{\mf{m}}(R/I^t)_0.$ 
\end{theorem}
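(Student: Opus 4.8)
The plan is to separate the argument into a purely formal part, establishing the power-series identity, and a cohomological part, showing that its truncation represents the generator of $H^3_{\mf m}(R/I^t)_0$.

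First I would verify the underlying identity formally. Setting $a_1 = \Delta_1/vz$, $a_2 = \Delta_2/wx$, $a_3 = \Delta_3/uy$, the relations $vz - wy = \Delta_1$, $wx - uz = \Delta_2$, and $uy - vx = \Delta_3$ give $1 - a_1 = wy/vz$, $1 - a_2 = uz/wx$, $1 - a_3 = vx/uy$, whose product telescopes to $uvwxyz/uvwxyz = 1$. Taking $\ln$ and expanding each $\ln(1-a_i) = -\sum_{m\ge1} a_i^m/m$ yields $\sum_i \sum_{m\ge1} a_i^m/m = 0$. Since $\Delta_i \in I$, each $a_i$ lies in $I$ times the relevant localization, so these series converge $I$-adically, and I would record the identity as an equality in $\mathrm{Frac}(\hat R_I)$, into which all three localizations embed. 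This paragraph is routine.

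Next I would pass to $R/I^t$. Because $a_i^m = \Delta_i^m/(\text{pair})^m$ with $\Delta_i^m \in I^m \subseteq I^t$ for $m \ge t$, every tail term dies modulo $I^t$, so each infinite sum collapses to the finite truncation $S_i^{(t)} = \sum_{m=1}^{t-1} a_i^m/m$, and the identity descends to $S_1^{(t)} + S_2^{(t)} + S_3^{(t)} = 0$ in $(R/I^t)_{uvwxyz}$, with every homogeneous term of degree $0$. The cohomological heart is then to read this truncation as a class in $H^3_{\mf m}(R/I^t)_0$ computed from $\check C^\bullet(\mathbf x, R/I^t)$. The key point is that a term $\Delta_i^m/(\text{pair})^m$, though it has a two-variable denominator, homogenizes into a $\check C^3$-component by writing, for instance, $\Delta_1^m/(vz)^m = (u\Delta_1)^m/(uvz)^m \in (R/I^t)_{uvz}$, where degree $0$ forces exactly one extra variable. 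I would assemble the truncated terms, with the correct choice of third variable and Čech signs, into an explicit cochain $\eta_t \in \check C^3$, and then argue that the descended identity $\sum_i S_i^{(t)} = 0$ is precisely the statement $d\eta_t = 0$ in $\check C^4$, so that $\eta_t$ is a cocycle. By the rank-one result it then suffices to show $\eta_t$ is nonzero in cohomology, which I would do by tracking $\eta_t$ through the isomorphisms $H^3_{\mf m}(R/I^t)_0 \cong H^3_I(R)_{-6} \cong H^6_{\mf m}(R)_{-6} = \mathbb F\cdot\tfrac{1}{uvwxyz}$ and checking that the top-order piece $a_1a_2a_3 = \Delta_1\Delta_2\Delta_3/uvwxyz$ matches the socle generator up to a nonzero scalar.

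The main obstacle I anticipate is exactly the subtlety the text flags: proving that $\eta_t$ is not a coboundary. A two-variable denominator does not force membership in the image of $\check C^2 \to \check C^3$, so non-triviality cannot be read off the shape of the terms and must instead be extracted from the non-vanishing of the tracked class. Arranging the bookkeeping of third variables and signs so that $d\eta_t = 0$ holds on the nose, while the class nonetheless survives in cohomology, is the delicate part of the proof.
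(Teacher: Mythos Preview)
Your treatment of the formal identity and of the cocycle condition matches the paper: the vanishing of $\sum_i S_i^{(t)}$ in $(R/I^t)_{uvwxyz}$ is exactly what the paper exploits, and the paper likewise checks $d\eta=0$ component by component (reducing by symmetry to three components of $\check C^4$). One correction in your assembly step: your sample placement is off. The paper puts $\sum_m \tfrac{1}{m}(\Delta_1/vz)^m$ into $(R/I^t)_{vyz}$ (and its companion into $(R/I^t)_{wyz}$), not into $(R/I^t)_{uvz}$; in fact every component with two top-row variables inverted is set to zero. There is no ``degree~$0$ forces exactly one extra variable'' principle here---the third variable is a deliberate choice, and choosing a top-row variable as you did would not produce a cocycle.

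The genuine divergence is in the non-coboundary step. The paper does \emph{not} track $\eta_t$ through the chain $H^3_{\mf m}(R/I^t)_0\cong H^3_I(R)_{-6}\cong H^6_{\mf m}(R)_{-6}$. Instead it introduces a $\ZZ^4$-multigrading on $R$ (columns of $X$ plus a row weight), observes that $\eta$ is homogeneous of multidegree $(0,0,0,0)$, and shows by an elementary bookkeeping argument that any multidegree-$(0,0,0,0)$ element of $\check C^2({\bf x},R/I^t)$ is forced to be a scalar in every component; since $\eta$ has a component, e.g.\ $\sum_m \tfrac{1}{m}(\Delta_3/uy)^m$, that is not a scalar, it cannot be a coboundary. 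This argument is short, works uniformly for every $t\ge 2$, and is characteristic-free (the paper reuses it verbatim in the $p>0$ section).

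Your proposed route, by contrast, has a real gap. The identification $H^3_{\mf m}(R/I^t)_0\cong H^3_I(R)_{-6}$ passes through graded local duality and a direct limit of $\Ext$ modules, and the further identification with $H^6_{\mf m}(R)_{-6}$ comes from the Lyubeznik--Singh--Walther theorem; neither of these is given at the level of explicit \v{C}ech cocycles, so ``tracking $\eta_t$'' through them is not something you can do by inspection. In particular, the heuristic that the ``top-order piece'' $a_1a_2a_3=\Delta_1\Delta_2\Delta_3/(uvwxyz)$ should land on the socle generator $1/(uvwxyz)$ is suggestive but not a proof: $\Delta_1\Delta_2\Delta_3$ already lies in $I^3$, and you have not said what map carries this expression to $H^6_{\mf m}(R)$ or why it is nonzero there. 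If you want to salvage this line, you would need to make the duality and the LSW isomorphism explicit enough to push a \v Cech class through---substantially more work than the paper's two-paragraph multigrading argument.
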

\par While the above identity in the fraction field of the $I$-adic completion is the heart of our local cohomology module element, it must be finessed slightly into the form of an element in $\check{C}^3(\bf{x}, R/I^t)$. 
We will show that Table \ref{charzeroelement} describes such an element, which we will henceforth refer to as $\eta$.  We will prove the above theorem in two steps. First, in Subsection \ref{subsection:char0vanishing}, we will show that $\eta$ maps to 0 in $\check{C}^4({\bf x}, R/I^t)$. Second, in Subsection \ref{subsection:char0notImage}, we will show that $\eta$ is not in the image of $\check{C}^2({\bf x}, R/I^t)$. 

{	\rowcolors{2}{babyBlossoms!50}{white}
\begin{table}[t]
	\begin{center}
		\caption{Element in $H^3_{\mf{m}}(R/I^t)$}
		\label{charzeroelement}
		\medskip
		\begin{tabular}{lc|r}
			\hline
			\rowcolor{babyBlossoms} \hspace{.7in} 
			 & \multicolumn{1}{c}{Component of $\check{C}^{3}({\bf x},R/I^t)$} & \multicolumn{1}{c}{Component in the $I$-adic completion}  \\
			\hline
			$R_{uvw}$ & $0$ & $0$\\ 
				$R_{uvx}$ & $0$& $0$\\
			$R_{uwx}$ & $0$ & $0$\\ 
				$R_{vwx}$ & $0$& $0$\\ 
			$R_{uvy}$ & $0$ & $0$\\ 
				$R_{uwy}$ & $0$& $0$ \\ 
			$R_{vwy}$ & $0$& $0$ \\ 
				$R_{uxy}$ & $-\sum\limits_{m=1}^{t-1} \frac{1}{m} (\frac{\Delta_3}{uy})^{m}$ & $\ln\left(\frac{uy}{vx}\right)$  \\ 
			$R_{vxy}$ & $\sum\limits_{m=1}^{t-1} \frac{1}{m} (\frac{\Delta_3}{vx})^{m}$& $\ln\left(\frac{uy}{vx}\right)$  \\ 
							$R_{wxy}$ & $\sum\limits_{m=1}^{t-1} \frac{1}{m} (\frac{\Delta_2}{wx})^{m} - \sum\limits_{k=1^{t-1}} \frac{1}{k} (\frac{\Delta_1}{wy})^{m}$& $\ln\left(\frac{uz}{wx}\right) + \ln\left(\frac{wy}{vz}\right)$\\ 
			$R_{uvz}$ & $0$& $0$\\ 
				$R_{uwz}$ & $0$ & $0$\\ 
			$R_{vwz}$ & $0$ &$0$\\ 
				$R_{uxz}$ & $-\sum\limits_{m=1}^{t-1} \frac{1}{m} (\frac{\Delta_2}{uz})^{m}$& $\ln(\frac{uz}{wx})$ \\ 
			$R_{vxz}$ & $-\sum\limits_{m=1}^{t-1} \frac{1}{m} (\frac{\Delta_1}{vz})^{m} + \sum\limits_{k=1^{t-1}} \frac{1}{k} (\frac{\Delta_3}{vx})^{m}$  & $\ln\left(\frac{vz}{wy}\right) + \ln\left(\frac{uy}{vx}\right)$ \\ 
				$R_{wxz}$ & $\sum\limits_{m=1}^{t-1} \frac{1}{m} (\frac{\Delta_2}{wx})^{m}$& $\ln\left( \frac{uz}{wx}\right) $  \\
			$R_{uyz}$ & $\sum\limits_{m=1}^{t-1} \frac{1}{m} (\frac{\Delta_3}{uy})^{m}  -\sum\limits_{k=1^{t-1}} \frac{1}{k} (\frac{\Delta_2}{uz})^{m}$ & $\ln\left(\frac{vx}{uy} \right) + \ln\left(\frac{uz}{wx} \right) $ \\ 
				$R_{vyz}$ & $-\sum\limits_{m=1}^{t-1}\frac{1}{m} (\frac{\Delta_1}{vz})^{m}$& $\ln\left( \frac{vz}{wy}\right)$  \\ 
			$R_{wyz}$ & $\sum\limits_{m=1}^{t-1} \frac{1}{m} (\frac{\Delta_1}{wy})^{m}$ & $\ln\left(\frac{vz}{wy}\right) $\\ 
				$R_{xyz}$ & $0$ & $0$\\ 
		\end{tabular}
	\end{center}
\end{table}  
}	

\subsection{The Element $\eta$ Vanishes}\label{subsection:char0vanishing} 

	First we show that the element given by Table \ref{charzeroelement} is a cocycle, that is, the image of $\eta$ is 0 in $\check{C}^4({\bf x}, R/I^t)$.

\begin{proof}  Let $\hat{R}$ denote the $I$-adic completion of $R$. Then, in the fraction field of $\hat{R}$, one has
	\begin{align*} 0 & = \ln(1) \\
	&= \ln \left( \textcolor{myVermillion}{\frac{wy}{vz}} \textcolor{babyBlossoms}{ \frac{uz}{wx}} \textcolor{myBlue}{ \frac{vx}{uy}} \right) \\ 
	&=  \ln \left( \left(1-\textcolor{myVermillion}{\left(1- \frac{wy}{vz}\right)}\right)\left(1-\textcolor{babyBlossoms}{\left(1-
		\frac{uz}{wx}\right)}\right)\left(1-\textcolor{myBlue}{\left(1- \frac{vx}{uy}\right)}\right) \right) 
	\\ 
	&= \ln \left( \left(1-\textcolor{myVermillion}{\left( \frac{vz - wy}{vz}\right)}\right)\left(1-\textcolor{babyBlossoms}{\left(
		\frac{wx-uz}{wx}\right)}\right)\left(1-\textcolor{myBlue}{\left( \frac{uy-vx}{uy}\right)}\right) \right) 
	\\
	&=  \ln \left( \left(1-\textcolor{myVermillion}{\left(\frac{\Delta_1}{vz}\right)}\right)\left(1-\textcolor{babyBlossoms}{\left(
		\frac{\Delta_2}{wx}\right)}\right)\left(1-\textcolor{myBlue}{\left(\frac{\Delta_3}{uy}\right)}\right) \right) 
	\\
	&=  \ln \left( \left(1-\textcolor{myVermillion}{\left(\frac{\Delta_1}{vz}\right)}\right)+\ln\left(1-\textcolor{babyBlossoms}{\left(
		\frac{\Delta_2}{wx}\right)}\right)+ \ln \left(1-\textcolor{myBlue}{\left(\frac{\Delta_3}{uy}\right)}\right) \right) 
	\\
	&= \displaystyle\sum\limits_{m=1}^{\infty} \frac{1}{m} \left(\textcolor{myVermillion}{\frac{\Delta_1}{vz}}\right)^m  
	+ \sum\limits_{m=1}^{\infty} \frac{1}{m} \left(\textcolor{babyBlossoms}{\frac{\Delta_2}{wx}}\right)^m
	+ \sum\limits_{m=1}^{\infty} \frac{1}{m} \left(\textcolor{myBlue}{\frac{\Delta_3}{uy}}\right)^m.
	\end{align*} 
	Because the minors, $\Delta_i$, are in the ideal $I$, in $\check{C}^3(\textbf{x}, R/I^t)$, the elements of the form $$\sum\limits_{m=1}^{t-1} \frac{1}{m} \left( \frac{\Delta_1}{vz }\right)^m$$ are exactly equal to the truncations of the sum, $$\sum\limits_{m=1}^{\infty} \frac{1}{m} \left( \frac{\Delta_1}{vz }\right)^m.$$ Therefore, an element of  $\check{C}^3(\textbf{x}, R/I^t)$ has the same image in $\check{C}^4(\textbf{x}, R/I^t)$ as would an element in $\check{C}^3(\textbf{x}, \lim_{t \to \infty} R/I^t)$, that is, an element in the \v{C}ech complex of the $I$-adic completion of $R$.  We also record the completion of the element in Table \ref{charzeroelement}.
	\par The element $\eta$ of Table~\ref{charzeroelement} is nonzero only in components of $\check{C}^3({\bf x}, R/I^t)$ with one of the variables, $u, v,$ and $w$ (i.e., variables in the first row) and two of the variables $x, y,$ or $z$  (i.e., variables in the second row) inverted. Thus, the image of $\eta$ in all components of $\check{C}^4({\bf x}, R/I^t)$ with only one of the variables $x, y,$ or $z$ inverted will certainly be 0; that is, the image of $\eta$ is 0 in the components $(R/I^t)_{uvwx}, (R/I^t)_{uvwy}$ and $(R/I^t)_{uvwz}$. 
	\par By symmetry, it suffices to check that the image of $\eta$ is 0 in the three components, $(R/I^t)_{uvxy}, (R/I^t)_{uxyz},$ and $(R/I^t)_{uwxy}$.
	\begin{enumerate}[1.]
		\item 

	In $(R/I^t)_{uvxy}$, $\eta$ maps to: 

	\begin{equation*}
	\sum\limits_{m=1}^{t-1}\frac{1}{m}\left(\frac{\Delta_3}{uy}\right) + \sum\limits_{m=1}^{t-1}\frac{1}{m}\left(\frac{\Delta_3}{vx}\right),
	\end{equation*} 
	which agrees with the infinite sum 
	\begin{equation*} \sum\limits_{m=1}^{\infty}\frac{1}{m}\left(\frac{\Delta_3}{uy}\right) + \sum\limits_{m=1}^{\infty}\frac{1}{m}\left(\frac{\Delta_3}{vx}\right)
	\end{equation*} 
	in the $I$-adic completion of the module $(R/I^t)_{uvxy}$. We have that the sum equals
	\begin{align*} 
	\ln\left(\frac{vx}{uy}\right) + \ln\left( \frac{uy}{vx} \right) 
	=\ln\left(\frac{vx}{uy}\right) - \ln \left( \frac{vx}{uy} \right) 
	= 0.
	\end{align*} 
\item 	The image of $\eta$ in $(R/I^t)_{uxyz}$ is 
	\begin{align*}
	&\sum_{m=1}^{t-1} \frac{1}{m}\left( \frac{\Delta_3}{uy}\right)^m - \sum_{m=1}^{t-1}\frac{1}{m}\left( \frac{\Delta_2}{uz}\right)^m -  \sum\limits_{m=1}^{t-1} \frac{1}{m} \left(\frac{\Delta_3}{uy}\right)^{m}  + \sum\limits_{m=1}^{t-1} \frac{1}{m} \left(\frac{\Delta_2}{uz}\right)^{m} = 0
	\end{align*} 
\item	Finally, the image of $\eta$ in $(R/I^t)_{uwxy}$ is 
	\begin{align*}
	\sum\limits_{m=1}^{t-1} \frac{1}{m} \left( \frac{\Delta_3}{uy}\right)^{m} + \sum\limits_{m=1}^{t-1} \frac{1}{m} \left(\frac{\Delta_2}{wx}\right)^{m} -\sum\limits_{k=1}^{t-1} \frac{1}{k} \left(\frac{\Delta_1}{wy}\right)^{m} 
	\end{align*} 
	which, in $R/I^{t}$ is the same as 
	\begin{align*}
	&  \sum\limits_{m=1}^{\infty} \frac{1}{m} \left( \frac{\Delta_3}{uy}\right)^{m} + \sum\limits_{m=1}^{\infty} \frac{1}{m} \left(\frac{\Delta_2}{wx}\right)^{m} -\sum\limits_{k=1}^{\infty} \frac{1}{k} \left(\frac{\Delta_1}{wy}\right)^{m} \\
	=& \ln\left(\frac{vx}{uy}\right) + \ln\left( \frac{uz}{wx}\right) - \ln\left( \frac{vz}{wy}\right) \\ 
	=& \ln\left(\frac{vx}{uy}\right) + \ln\left( \frac{uz}{wx}\right) + \ln\left( \frac{wy}{vz}\right) \\ 
	=& \ln \left( \frac{vxuzwy}{uywxvz} \right)\\ 
	=& \ln(1) \\ 
	=& 0 . \qedhere
	\end{align*} 
		\end{enumerate} 
\end{proof} 
\subsection{The Element $\eta$ Is Not a Coboundary}\label{subsection:char0notImage}
 
	The element given by Table \ref{charzeroelement} is not a coboundary, that is, the element $\eta$ is not the image of an element from $\check{C}^2({\bf x}, R/I^t)$. 

\begin{proof} 
	\par Give $R$ the following multi-grading: 
	\begin{align*}
	\label{multigrading}  
	\deg(u)&=(1,0,0,0) & 	\deg(x)&=(1,0,0,1) \\
	\deg(v)&=(0,1,0,0) & 	\deg(y)&=(0,1,0,1) \\
	\deg(w)&=(0,0,1,0) & 	\deg(z)&=(0,0,1,1) 
	\end{align*}
	The generators of the ideal $I$ are homogeneous with respect to this multi-grading, and hence one obtains a grading on $R/I^t$.  
	The element, $\eta \in \check{C}^3({\bf x}, R/I^t)$ is homogeneous of degree $(0,0,0,0)$. 

	\par Suppose for the sake of contradiction that the element, $\eta$ were a coboundary, that is, in the image of $\check{C}^2({\bf x}, R/I^t)$. The map from $\check{C}^2({\bf x}, R/I^t)$ to $\check{C}^3({\bf x}, R/I^t)$ is degree-preserving. Therefore, an element of $\check{C}^2({\bf x}, R/I^t)$ mapping to the given element would necessarily be degree $(0,0,0,0)$ in each component. 
	\par Consider the $R_{uv}$ component of $\check{C}({\bf x}, R/I^t)$. 
	Since $\deg(\frac{1}{u^nv^m}) = (-n,-m,0,0)$, in order for an arbitrary element $\deg(\frac{a}{u^nv^m})$ of $(R/I^t)_{uv}$ with $a \in R/I^t$ to be of multi-degree $(0,0,0,0)$, it must be that $\deg(a) = (n, m, 0, 0)$, or $a=0$. But since $a \in R$, this means $a = \lambda u^n v^m$. Thus, any degree $(0,0,0,0)$ element in $R_{uv}$ is of the form $$ \frac{\lambda u^n v^m}{u^nv^m}$$
	for $\lambda \in \mathbb{F}$. 
	Thus any multi-degree $(0,0,0,0)$ element in $R_{uv}$ is, in fact, in the field $\mathbb{F}$. 
	\par The same argument shows that all elements in $R_{uw}, R_{vw}, R_{xy}, R_{xz}, R_{yz}, R_{ux}, R_{vy}$, and $R_{wz}$ in multi-degree $(0,0,0,0)$ are scalars from the field $\mathbb{F}$. 
	\par Note that $\eta$, is $0$ in the $R_{uvx}$ component. Since the elements in $R_{ux}$ and $R_{uv}$ are scalars in $\mathbb{F}$, the preimage of $\eta$ in the $R_{xv}$ component must also be a scalar.
	Using the fact that $\eta$ is 0 in the $R_{uvy}, R_{uwx}, R_{uwz}, R_{vwz}$, and $R_{vwy}$ components, a similar argument shows that the preimage of $\eta$ would be forced to be a scalar in every component of $\check{C}^2(\textbf{x},R)$.  
	
	\par Thus, if the element $\eta$ were in the image of an element of $\check{C}^2(\textbf{x}, R)$, it would be the image of an element that consisted of scalars in each component. However, $\frac{\Delta_1}{uy}$ is not in $\mathbb{F}$.
\end{proof} 
	\par Note that the arguments in \ref{subsection:char0notImage} were independent of the characteristic of the ground field. 
%%%%%%%%%%%%%%%%%%%%%%%%%%%%%%%%%%%%%%%%%%%%%%%%%%
\section{Explorations in Characteristic $p > 0 $} \label{charPSection} 
\par The results of Theorem \ref{BBLSZ:CA} require the characteristic of the ground field be $0$. In characteristic $p>0$, the situation is remarkably different. 
\par We shall consider the same setting but over a field of positive characteristic. Let $X$ be a $2 \times 3$ matrix of indeterminates and let $R$ be the ring $\mathbb{F}[X]$ for a field of prime characteristic $p>0$:  
$$ R = \mathbb{F}\begin{bmatrix} u & v & w \\ x & y & z \end{bmatrix}. $$ 
As before, let $I$ be the ideal generated by size two minors of the matrix $X$, and let
$$  \Delta_1 = vz-wy, \quad \Delta_2 = wx-uz, \quad \Delta_3 = uy-vx. $$
In the characteristic 0 case, Proposition~\ref{rankOne} guaranteed that the local cohomology module $H^3_{\mf{m}}(R/I^t)_0$ is an $\mathbb{F}$-vector space of rank 1 for all $t \geq 2$. In the characteristic $p>0$ case, there is no such guarantee, and indeed,we shall see that the ranks of the local cohomology modules $H^3_{\mf{m}}(R/I^t)_0$ grow arbitrarily large on a subsequence of $t$ in the natural numbers. 

In Subsection \ref{subsection:charPconstruction}, we construct elements of $H^3_{\mf{m}}(R/I^t)_0$. The construction proceeds by showing in Subsection \ref{Subsection:notABoundaryCharP} that the given elements are not boundaries, then showing in Subsection \ref{subsection:cycleCharP} that the given elements are cycles. 
%%%%%%%%%%%%%%%%%%%%%%%%%%%%%%%%%%%%%%%%%%%%%%%%%%%%%%%%%%%%%%%%%%%%%%%%%%%%%%%%%%
\subsection{Elements Of Local Cohomology Modules In Positive Characteristic} \label{subsection:charPconstruction} 
We seek to construct elements of $H^3_{\mf{m}}(R/I^t)_0$ when the ground field is characteristic $p > 0$. The element $\eta$ from Table \ref{charzeroelement} is no longer defined when the characteristic of the field is positive, as $\eta$ is defined using the fraction $\frac{1}{m}$ for arbitrary $m \leq t-1$.   
\begin{theorem} \label{charPRanks}
	Let $q$ be the largest power of p such that $q \leq t-1$, and let $q_2$ be the smallest power of $p$ such that $q+q_2 \geq t$. Further, suppose that $m$ is a positive integer with $0 < m \leq q$ such that $ q_2 \lvert m$. Then
	\begin{equation*}
	\rank H^3_{\mf{m}}(R/I^t)_0 \geq 2 \left \lfloor \frac{q}{q_2} \right \rfloor -1.
	\end{equation*} 
	In particular, whenever $t = p^e +1$ for some $e$, we have that the rank of $H^3_{\mf{m}}(R/I^t)_0$ is at least $2t-1$.
\end{theorem}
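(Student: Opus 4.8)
The plan is to imitate the characteristic-$0$ construction of Table~\ref{charzeroelement}, replacing the natural-log series (which is unavailable in characteristic $p$) by the terms of it that survive. The engine is Frobenius: for a $p$-power $q_2$ one has $\left(\frac{\Delta_1}{vz}\right)^{q_2} = 1 - \left(\frac{wy}{vz}\right)^{q_2}$ and the analogous identities for $\Delta_2,\Delta_3$, so the three ratios $\frac{wy}{vz},\frac{uz}{wx},\frac{vx}{uy}$ whose product is $1$ still govern the combinatorics. First I would write down, for each admissible exponent $m$ (those with $q_2 \mid m$ and $0 < m \le q$), an element $\eta_m \in \check{C}^3({\bf x}, R/I^t)_0$ supported on the same ``one-from-the-top-row, two-from-the-bottom-row'' components as the class $\eta$, with entries built from the powers $\left(\frac{\Delta_i}{\cdot}\right)^{m}$ together with $p$-integral coefficients. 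The two hypotheses are exactly what make this work: $m \le q$ forces the offending coefficients $\frac{1}{k}$ with $p \mid k$ to occur only at exponents exceeding $t-1$, so that those terms die in $R/I^t$, while $q_2 \mid m$ is what lets the Frobenius collapse above apply. Transposing the matrix (swapping the two rows) produces a second family $\eta_m'$ supported on the ``two-from-the-top, one-from-the-bottom'' components.

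Next, following the order of the paper, I would show these elements are independent modulo coboundaries. The crucial input, already established in Subsection~\ref{subsection:char0notImage} and noted there to be characteristic-free, is that a multidegree-$(0,0,0,0)$ element of $\check{C}^2({\bf x}, R/I^t)$ must be a scalar in each of the nine components $R_{uv}, R_{uw}, R_{vw}, R_{xy}, R_{xz}, R_{yz}, R_{ux}, R_{vy}, R_{wz}$, and the vanishing of $\eta$ in components such as $R_{uvx}, R_{uvy}, \dots$ propagates this to show that any degree-$0$ coboundary is the image of a scalar tuple. I would then isolate a distinguishing ``leading'' monomial $\left(\frac{\Delta_i}{\cdot}\right)^{m}$ for each $\eta_m$ and each $\eta_m'$; since distinct admissible $m$ yield distinct top powers of the $\Delta_i$ and no nonzero scalar tuple in $\check{C}^2$ produces such a term, no nontrivial $\mathbb{F}$-linear combination of the $\eta_m, \eta_m'$ can be a coboundary. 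Counting admissible exponents gives $\lfloor q/q_2 \rfloor$ elements in each family, and the single relation between the two families, which reflects the fact that in characteristic $0$ the two transposed classes coincide by Proposition~\ref{rankOne}, accounts for the ``$-1$'', giving rank at least $2\lfloor q/q_2 \rfloor - 1$.

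Finally I would verify that each $\eta_m$ and each $\eta_m'$ is a cocycle, i.e.\ maps to $0$ in $\check{C}^4({\bf x}, R/I^t)$. As in Subsection~\ref{subsection:char0vanishing}, by symmetry it suffices to check a handful of faces; on each face the image is an $\mathbb{F}$-combination of powers $\left(\frac{\Delta_i}{\cdot}\right)^{m}$ whose vanishing I would deduce from the two Pl\"ucker syzygies $u\Delta_1 + v\Delta_2 + w\Delta_3 = 0$ and $x\Delta_1 + y\Delta_2 + z\Delta_3 = 0$ (raised to the $q_2$-th power via Frobenius) together with the identity $\frac{wy}{vz}\cdot\frac{uz}{wx}\cdot\frac{vx}{uy} = 1$ that drove the characteristic-$0$ computation. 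The main obstacle is not this cocycle check, which is a direct adaptation of the characteristic-$0$ case, but the independence and counting step: one must confirm that all $2\lfloor q/q_2 \rfloor$ constructed classes survive modulo the degree-$0$ coboundaries and satisfy exactly one relation, so that the rank is bounded below by $2\lfloor q/q_2 \rfloor - 1$. Specializing to $t = p^e + 1$, where $q = t-1$, $q_2 = 1$, and every $m$ with $0 < m \le t-1$ is admissible, then yields the stated lower bound, which grows linearly in $t$ and hence without bound along this subsequence.
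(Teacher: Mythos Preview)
Your plan has a genuine gap: cochains built only from the bare powers $\left(\Delta_i/\,\cdot\,\right)^m$, hence lying in multidegree $(0,0,0,0)$, are not cocycles for most admissible $m$. On the face $(R/I^t)_{uvxy}$ the obstruction is whether $\Delta_3^m\bigl((vx)^m-(uy)^m\bigr)$ lies in $I^t$; the hypothesis $q_2\mid m$ extracts only a factor $\Delta_3^{q_2}$ from $(vx)^m-(uy)^m$, giving $\Delta_3^{m+q_2}$, and $m+q_2\ge t$ fails for every admissible $m$ except the top one $m=q$. In the flagship case $t=p^e+1$ one has $q_2=1$, so your construction would yield a single class rather than roughly $2t$ of them. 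Your reading of the hypotheses is also off: the bound $m\le q$ has nothing to do with avoiding denominators $1/k$ with $p\mid k$, and the Pl\"ucker syzygies $u\Delta_1+v\Delta_2+w\Delta_3=0$, $x\Delta_1+y\Delta_2+z\Delta_3=0$ are never used.

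What the paper actually does is attach a twist: the entries of $\eta_{1,m}$ carry an extra factor $(x/u)^{q-m}$, $(y/v)^{q-m}$, or $(z/w)^{q-m}$, so that $\eta_{1,m}$ sits in multidegree $(0,0,0,q-m)$ and the transposed family $\eta_{2,m}$ in $(0,0,0,-(q-m))$. This twist is the key idea you are missing. In the cocycle check it boosts the exponent from $m$ up to $q$: after pulling $\Delta_3^{q_2}$ out of $\alpha^m-1$ as above, the remaining combination of the two twisted terms is a unit times $(vx)^q-(uy)^q$, and since $q$ itself is a $p$-power, Frobenius gives $(-\Delta_3)^q$; the product $\Delta_3^{q+q_2}$ then lies in $I^t$ precisely because $q+q_2\ge t$. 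The twist also delivers independence essentially for free, since distinct values of $m$ (and the two families) land in pairwise distinct multidegrees, with the sole overlap at $m=q$ accounting for the ``$-1$''. The characteristic-free argument of Subsection~\ref{subsection:char0notImage} is invoked only at that overlap; for $m\neq q$ the paper runs a separate not-a-coboundary argument in multidegree $(0,0,0,q-m)$, which is different from the one you propose.
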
 
\begin{proof} 

	The proof of Theorem \ref{charPRanks} proceeds by concrete construction; we will demonstrate  $2 \left \lfloor \frac{q}{q_2} \right \rfloor -1$ linearly independent elements in $H^3_{\mf{m}}(R/I^t)_0$.  
	\par First, let 
	\begin{equation*} 
	\alpha = \dfrac{vx}{uy}, \quad \beta = \dfrac{wy}{vz}, \quad \gamma = \dfrac{uz}{wx}.
	\end{equation*} 
	Note that $1-\alpha = \frac{\Delta_3}{uy}, 1- \beta = \frac{\Delta_1}{vz}$ and $1 - \gamma = \frac{\Delta_2}{wx}$. 
	We claim the elements of Table~\ref{charPElement} and Table~\ref{charPElement2} are nonzero elements of $H^3_{\mf{m}}(R/I^t)_0$ whenever $q$, $q_2$, and $m$ satisfy the hypotheses of Theorem~\ref{charPRanks}. Furthermore, the elements of Table~\ref{charPElement} and Table~\ref{charPElement2} are distinct when $m \neq q$. We will refer to the elements in Table \ref{charPElement} as $\eta_{1,m}$ and the elements in Table \ref{charPElement2} as $\eta_{2,m}$, with $m$ ranging over all integers less than $q$ that are divisible by $q_2$ as in Theorem \ref{charPRanks}. 
	
	\rowcolors{2}{babyBlossoms!50}{white}
	\begin{table}[t] \label{} 
		\begin{center}
			\caption{Elements in $\check{C}^3({\bf x}, \hat{R})$ in characteristic $p>0$} 
			\label{charPElement}
			\medskip
			\begin{tabular}{l r}
				\hline
				\rowcolor{babyBlossoms}
				Component of $\check{C}^{3}({\bf x},R/I^t)$ & \multicolumn{1}{c}{} \\
				\hline
				$R_{uvw}$ & $0$ \\ 
					$R_{uvx}$ & $0$ \\
				$R_{uwx}$ & $0$ \\ 
					$R_{vwx}$ & $0$ \\ 
				$R_{uvy}$ & $0$ \\ 
					$R_{uwy}$ & $0$ \\ 
				$R_{vwy}$ & $0$ \\ 
					$R_{uxy}$ & $\frac{x}{u}^{q-m}(\alpha^m-1)$ \\ 
				$R_{vxy}$ & $ \frac{y}{v}^{q-m}(\frac{1}{\alpha^m}-1)$\\
					$R_{wxy}$ & $ \frac{z}{w}^{q-m}(1 -\gamma^m - \frac{1}{\beta^m}+1)$\\
				$R_{uvz}$ & $0$ \\ 
					$R_{uwz}$ & $0$ \\ 
				$R_{vwz}$ & $0$ \\ 
					$R_{uxz}$ & $\frac{x}{u}^{q-m}(1-\frac{1}{\gamma}^m)$ \\ 
				$R_{vxz}$ & $\frac{y}{v}^{q-m}(\frac{1}{\alpha}^m-1+\beta^m-1))$  \\ 
					$R_{wxz}$ & $ \frac{z}{w}^{q-m}(1-\gamma^m)$ \\
				$R_{uyz}$ & $ \frac{x}{u}^{q-m}(1-\alpha^m - \frac{1}{\gamma}^m+1)$ \\ 
					$R_{vyz}$ & $ \frac{y}{v}^{q-m}(\beta^m - 1)$\\ 
				$R_{wyz}$ & $ \frac{z}{w}^{q-m}(\frac{1}{\beta}^m - 1)$ \\ 
					$R_{xyz}$ & $0$ \\ 
			\end{tabular} 
		\end{center}
	\end{table} 
		
	\rowcolors{2}{babyBlossoms!50}{white}
	\begin{table}[t]
		\begin{center}
			\caption{Elements in $\check{C}^3({\bf x}, \hat{R})$ in characteristic $p>0$} 
			\label{charPElement2}
			\medskip
			\begin{tabular}{l r}
				\hline
								\rowcolor{babyBlossoms}
				Component of $\check{C}^{3}({\bf x},R/I^t)$ & \multicolumn{1}{c}{} \\
				\hline
				$R_{uvw}$ & $0$ \\ 
					$R_{uvx}$ & $\frac{u}{x}^{q-m}(1-\frac{1}{\alpha}^m)$ \\
				$R_{uwx}$ & $\frac{u}{x}^{q-m}(1-\gamma^m)$ \\ 
					$R_{vwx}$ & $ \frac{u}{x}^{q-m}(1-\frac{1}{\alpha}^m - \gamma^m+1)$  \\ 
				$R_{uvy}$ &  $ \frac{v}{y}^{q-m}(\alpha^m-1)$ \\ 
					$R_{uwy}$ & $\frac{v}{y}^{q-m}(1-\alpha^m-\frac{1}{\beta}^m+1))$  \\ 
				$R_{vwy}$ &  $ \frac{v}{y}^{q-m}(\frac{1}{\beta}^m - 1)$ \\ 
					$R_{uxy}$ & $0$ \\ 
				$R_{vxy}$ & $0$\\
					$R_{wxy}$ & $0$ \\
				$R_{uvz}$ & $ \frac{w}{z}^{q-m}(1-\frac{1}{\gamma}^m - \beta^m+1)$ \\ 
					$R_{uwz}$ & $ \frac{w}{z}^{q-m}(\gamma + 1)$ \\ 
				$R_{vwz}$ & $ \frac{w}{z}^{q-m}(\beta^m - 1)$ \\ 
					$R_{uxz}$ &  $0$\\ 
				$R_{vxz}$ &  $0$ \\ 
					$R_{wxz}$ &  $0$\\
				$R_{uyz}$ & $0$\\ 
					$R_{vyz}$ &$0$\\ 
				$R_{wyz}$ &  $0$\\ 
					$R_{xyz}$ & $0$ \\ 
			\end{tabular} 
		\end{center}
	\end{table} 
	\par We consider the elements $\eta_{1, m}$. The argument for the elements $\eta_{2,m}$ is symmetric.  
	\subsection{The Elements $\eta_{1,m}$ Are Not Coboundaries} \label{Subsection:notABoundaryCharP}
%	\begin{claim} \label{notABoundaryCharP} 
 We first consider the case that $m \neq q$ and so $q - m \geq 1$. 
		We claim these elements are not in the image of $\check{C}^2({\bf x}, R/I^t)$. 
%	\end{claim} 
 \par Suppose the element $\eta_{1,m}$ is in the image of $\check{C}^2({\bf x}, R/I^t)$ for some arbitrary $m \neq q$ that satisfies the hypotheses of Theorem \ref{charPRanks}. Using the multi-grading introduced in Section \ref{subsection:char0notImage}, every component of the elements $\eta_{1,m}$  is of degree $(0,0,0, q-m)$, where $1\leq q-m<q$. Since the maps between \v{C}ech complexes are degree preserving, they would need to be the image of elements also of degree $(0,0,0,q-m)$. Consider $R_{xy}, R_{xz},$ and $R_{yz}$. There are no elements of degree $(0,0,0,q-m)$ in each of these components whenever $q-m \geq 1$. 
	\par If $\eta_{1,m}$ were in the image of an element in $\check{C}({\bf x}, R/I^t)$, the $R_{uvx}$ component would be a sum of the elements in the $R_{uv}, R_{ux}$, and $R_{vx}$ components. In the $R_{uv}, R_{ux}$, and $R_{vx}$ components, the multi-degree elements of degree $(0,0,0,q-m)$ where $q-m \geq 1$ are the following:
	\begin{align*} 
	R_{uv}: &\quad \left( \frac{y}{v}\right)^{q-m}\mathbb{F} \oplus \left( \frac{x}{u}\right)^{q-m}\mathbb{F} \\
	R_{ux}: &\quad \left( \frac{x}{u} \right)^{q-m} \mathbb{F} \\ 
	R_{vx}: &\quad \left( \frac{y}{v}\right)^{q-m} \mathbb{F}\left[\frac{uy}{vx}\right]. \\
	\end{align*} 
	
	The entry in the $R_{uvx}$ component of $\eta_{1,m}$ is $0$.
	As $\left( \frac{y}{v} \right)^{q-m} \mathbb{F} \oplus \left( \frac{x}{u} \right)^{q-m} \mathbb{F}$ do not contain polynomials in $\mathbb{F}[\frac{uy}{vx}]$, in order for the sum of components from $R_{uv}, R_{ux}$, and $R_{vx}$ to be zero, the element in the $R_{vx}$ component could not be a polynomial in $\mathbb{F}$ and must be of the form $\lambda \left(\frac{y}{v}\right)^{q-m}$ where $\lambda$ is some scalar from $\mathbb{F}$. 
	Thus, if $\eta_{1,m}$ is in the image of an element in $\check{C}^2({\bf x}, R/I^t)$, that element is a constant from $\mathbb{F}$ in the $R_{vx}$ component.
	\par Similarly, the element in $R_{wvz}$ comes from a sum of the elements in the $R_{wv}, R_{wz},$ and $R_{vz}$ components. Since the entry in the $R_{uvy}$ component is $0$, there must not be a nonzero power of $\frac{wy}{vz}$ in the $R_{vz}$ component. The entry in $R_{vz}$ then must be $\lambda' \left( \frac{y}{v} \right)$, where $\lambda'$ is some scalar from $\mathbb{F}$. 
	\par But then consider $\eta_{1,m}$ in the $R_{vxz}$ component
	$$ \left( \frac{y}{v}\right)^{q-m}\left(\frac{1}{\alpha^m}-1-\beta^m+1\right).$$
	If $\eta_{1,m}$ were a coboundary, then the above would have to be a sum of elements from $R_{vx}, R_{vz}$, and $R_{xz}$ components. We have already established there are no elements of degree $(0,0,0,q-~m)$ in $R_{xz}$, so it must only be from $R_{vx}$ and $R_{vz}$. However, by the above argument, the only possible elements in those components are in $\frac{y}{v}\mathbb{F}$. As $\eta_{1,m}$ in the $R_{vxz}$ component is not in $\frac{y}{v}\mathbb{F}$, it cannot be a coboundary.
	\subsection{The Elements $\eta_{1,m}$ Are Cocycles} \label{subsection:cycleCharP}
	%\begin{claim} \label{cycleCharP}
		 We show that the elements given by Table \ref{charPElement} are indeed cocycles, that is, the images of $\eta_{1, m}$ and $\eta_{2, m}$ are 0 in $\check{C}^4({\bf x}, R/I^t)$. The argument for the elements given by Table \ref{charPElement2} works similarly. 
	%\end{claim} 
\par	By symmetry, it suffices to check that the image of $\eta_{1,m}$ is 0 in the three components, $(R/I^t)_{uvxy}, (R/I^t)_{uxyz},$ and $(R/I^t)_{uwxy}$.
	\par First consider the image of $\eta_{1,m}$ in the $(R/I^t)_{uxyz}$ component: 
	\begin{align*} 
	-&\left(\frac{x}{u}\right)^{q-m}(\alpha^m-1)
	+\left(\frac{x}{u}\right)^{q-m}\left(1-\frac{1}{\gamma^m}\right)
	-\left(\frac{x}{u}\right)^{q-m}\left(1-\alpha^m - \frac{1}{\gamma^m}+1\right) \\ 
	=&\left( \frac{x}{u} \right)^{q-m}\left( -\alpha^m + 1 + 1 - \frac{1}{\gamma^m} - 1 + \alpha^m + \frac{1}{\gamma^m} - 1 \right) \\ 
	=&0.
	\end{align*} 
	\par Second, consider the image of $\eta_{1,m}$ in the $(R/I^t)_{uwxy}$ component:
	\begin{align*}
	& - \left( \frac{x}{u}\right)^{q-m}(\alpha^m-1)  + \left( \frac{z}{w}\right)^{q-m}(1-\gamma^m- \frac{1}{\beta^m}+1)\\ 
	=& -\left( \frac{x}{u}\right)^{q-m} \alpha^m+\left( \frac{x}{u}\right)^{q-m} +  \left( \frac{z}{w}\right)^{q-m}  - \left( \frac{z}{w}\right)^{q-m} \gamma^m - \left( \frac{z}{w} \right)^{q-m}\frac{1}{\beta^m} + \left( \frac{z}{w} \right)^{q-m} \\ 
	=& -\left( \frac{x}{u}\right)^{q-m} \left( \frac{vx}{uy} \right)^m+\left( \frac{x}{u}\right)^{q-m} +  \left( \frac{z}{w}\right)^{q-m} 
	\\ & \quad \quad \quad  - \left( \frac{z}{w}\right)^{q-m} \left( \frac{uz}{wx} \right)^m - \left( \frac{z}{w} \right)^{q-m}\left( \frac{vz}{wy}\right)^m + \left( \frac{z}{w} \right)^{q-m} \\ 
	=& -\frac{x^q v^m}{u^q y^m} + \left(\frac{x}{u}\right)^{q-m}+ \left(\frac{z}{w} \right)^{q-m} - \frac{z^q u^m}{w^qx^m} - \frac{v^m z^q}{w^q y^m} + \left( \frac{z}{w}\right)^{q-m}.
	\end{align*} 
	Since $m$ is divisible by $q_2$ by hypothesis, write $m = \zeta q_2$ for some natural number $\zeta$. Then the above equals
	\begin{align*} 
	& \frac{(wx)^q(-u^{\zeta q_2}y^{\zeta q_2 } + v^{\zeta q_2 }x^{\zeta q_2}) + (uz)^q(u^{\zeta q_2}y^{\zeta q_2} - v^{\zeta q_2} x^{\zeta q_2} )}{u^q w^q x^m y^m}\\ 
	=& \frac{(wx)^q(-u^{\zeta}y^{\zeta} + v^{\zeta }x^{\zeta})^{q_2} + (uz)^q(u^{\zeta}y^{\zeta} - v^{\zeta} x^{\zeta} )^{q_2}}{u^q w^q x^m y^m} \\
	=& \frac{-(wx)^q(v^{\zeta }x^{\zeta}- u^{\zeta}y^{\zeta})^{q_2} + (uz)^q(u^{\zeta}y^{\zeta} - v^{\zeta} x^{\zeta} )^{q_2}}{u^q w^q x^m y^m} \\
	=& \frac{\left(-(wx)^q + (uz)^q\right) (v^{\zeta }x^{\zeta}- u^{\zeta}y^{\zeta})^{q_2}}{u^q w^q x^m y^m} \\
	=&  \frac{\left(-wz + uz \right)^q (v^{\zeta }x^{\zeta}- u^{\zeta}y^{\zeta})^{q_2}}{u^q w^q x^m y^m}. 
	\end{align*} 
	The polynomial $(x-y)$ divides $(x^{\zeta} - y^{\zeta})$ for any natural number $\zeta$. 
	Let $\phi_{\zeta}(x,y)$ be the polynomial such that $\phi_{\zeta}(x,y)(x-y) = (x^{\zeta} - y^{\zeta})$. Then the above is 
	\begin{align*} 
	&\frac{\left(-wz + uz \right)^q ((uy-vx) \phi_{\zeta}(uy,vx))^{q_2}}{u^q w^q x^m y^m} \\
	&= \frac{\left( \Delta_1 \right)^q \left( \Delta_3 \right)^{q_2} \phi_{\zeta}(uy,vx)^{q_2} }{u^q w^q x^m y^m}. 
	\end{align*} 
	By hypothesis, $q + q_2 \geq t$. So this is indeed 0 in the $R_{uwxy}$ component of $\check{C}^4({\bf x}, R/I^t)$. 
	
	Third and finally, consider the image in $R_{uvxy}$. We will use the fact that $m$ is divisible by $q_2$, so again, let $\zeta$ be the natural number such that $m= \zeta q_2$.
	\begin{align*} 
	&\left( \frac{x}{u}\right)^{q-m}\left(1-\left(\frac{vx}{uy}\right)^m\right) -  \left(\frac{y}{v}\right)^{q-m}\left(\left( \frac{uy}{vx} \right)^m-1\right)   %&& \text{the negative comes from the map to cech 4}
	\\ =&\left( \frac{x}{u}\right)^{q-m}\left(1-\left(\frac{vx}{uy}\right)^{\zeta q_2}\right) -  \left(\frac{y}{v}\right)^{q-m}\left(\left( \frac{uy}{vx} \right)^{\zeta q_2}-1\right)  %&&\text{replacing $k$ with $\zeta q_2$ }
	\\ =&\left( \frac{x}{u}\right)^{q-m}\left(1-\left(\frac{vx}{uy}\right)^{\zeta }\right)^{q_2} -  \left(\frac{y}{v}\right)^{q-m}\left(\left( \frac{uy}{vx} \right)^{\zeta}-1\right)^{q_2}  %&& \text{because $q_2$ is a power of $p$ }
	\\ =&\left( \frac{x}{u}\right)^{q-m}\left(\frac{1}{uy}\right)^m\left((uy)^\zeta-\left(vx\right)^{\zeta }\right)^{q_2} - \left(\frac{y}{v}\right)^{q-m}\left(\frac{1}{vx} \right)^{m}\left(\left( uy \right)^{\zeta}-\left(vx\right)^{\zeta}\right)^{q_2}. 
	\end{align*} 
 Then the above is \\
	\begin{align*}
	&\left( \frac{x}{u}\right)^{q-m}\left(\frac{1}{uy}\right)^m (uy - vx)^{q_2} \phi_{\zeta}^{q_2}(uy, vx) - \left(\frac{y}{v}\right)^{q-m}\left(\frac{1}{vx} \right)^{m} (uy-vx)^{q_2} \phi_{\zeta}^{q_2}(uy,vx) 
	\\ &= \Delta_3^{q_2} \phi^{q_2}_{\zeta}(uy,vx) \left( \left(\frac{x}{u}\right)^{q-m} \left(\frac{1}{uy}\right)^m - \left(\frac{y}{v}\right)^{q-m}\left(\frac{1}{vx}\right)^m \right)  %&&\text{factoring out $\Delta_3^{q_2}\phi^{q_2}_{\zeta}$ } 
	\\ &= \Delta_3^{q_2} \phi^{q_2}_{\zeta}(uy,vx) \left(\frac{1}{uv}\right)^{q-m}\left( \left(xv\right)^{q-m} \left(\frac{1}{uy}\right)^m - \left(uy\right)^{q-m}\left(\frac{1}{vx}\right)^m \right)  %&& \text{pulling out $\frac{1}{uv}$}
	\\ &= \Delta_3^{q_2} \phi^{q_2}_{\zeta}(uy,vx) \left(\frac{1}{uv}\right)^{q-m} \left(\frac{1}{uyvx}^m\right)\left( \left(vx\right)^{q} - \left(uy\right)^{q}\right)  %&&\text{making a common denominator of $(uyvx)^k$} 
	\\ &= \Delta_3^{q_2} \phi^{q_2}_{\zeta}(uy,vx) \left(\frac{1}{uv}\right)^{q-m} \left(\frac{1}{uyvx}^m\right)\left( vx - uy\right)^q  %&&\text{since $q$ is a power of $p$}
	\\ &= \Delta_3^{q_2 + q}  \phi^{q_2}_{\zeta}(uy,vx) \left(\frac{1}{uv}\right)^{q-m} \left(\frac{1}{uyvx}^m\right).
	\end{align*}
	Recall that $q_2 + q \geq t$, so this is indeed $0$ in the $R_{uvxy}$ component of $\check{C}^4({\bf x},R/I^t)$. 
	
	\par Now consider the case that $m=q$ and so $q-m = 0$, which is recorded in Table~\ref{charPchar0unite}. In this case, the element is in multi-degree $(0,0,0,0)$.  Recall that the argument from Section~\ref{subsection:char0notImage} is characteristic free, so to show this element is not a coboundary.  
		
	\rowcolors{2}{babyBlossoms!50}{white}
	\begin{table}[t]
		\begin{center}
			\caption{Elements in $\check{C}^3({\bf x}, \hat{R})$ in characteristic $p>0$} 
			\label{charPchar0unite}
			\medskip
			\begin{tabular}{l r | r}
								\rowcolor{babyBlossoms}
				\hline
				Component of $\check{C}^{3}({\bf x},R/I^t)$ & \multicolumn{1}{c}{} &\\
				\hline
				$R_{uvw}$ & $0$ & $0$ \\ 
					$R_{uvx}$ & $0$ & $0$\\
				$R_{uwx}$ & $0$ & $0$\\ 
					$R_{vwx}$ & $0$ & $0$\\ 
				$R_{uvy}$ & $0$ & $0$\\ 
					$R_{uwy}$ & $0$ & $0$ \\ 
				$R_{vwy}$ & $0$ & $0$ \\ 
					$R_{uxy}$ & $(\alpha^q-1)$ & $- \left( \frac{\Delta_3}{uy} \right)^q$ \\ 
				$R_{vxy}$ & $ (\frac{1}{\alpha^q}-1)$ & $\left( \frac{\Delta_3}{vx} \right)^q$\\
					$R_{wxy}$ & $ (1 -\gamma^q - \frac{1}{\beta^q}+1)$ & $\left(\frac{\Delta_2}{wx}\right)^q - \left(\frac{\Delta_1}{wy}\right)^q$ \\
				$R_{uvz}$ & $0$ & $0$\\ 
					$R_{uwz}$ & $0$ & $0$\\ 
				$R_{vwz}$ & $0$ & $0$ \\ 
					$R_{uxz}$ & $(1-\frac{1}{\gamma}^q)$ & $- \left( \frac{\Delta_2}{uz}\right)^q $ \\ 
				$R_{vxz}$ & $(\frac{1}{\alpha}^q-1+\beta^q-1))$ &  $\left( \frac{\Delta_3}{vx} \right)^q -\left( \frac{\Delta_1}{vz}  \right)^q$\\ 
					$R_{wxz}$ & $(1-\gamma^q)$ & $\left( \frac{\Delta_2}{wx}\right)^q $\\
				$R_{uyz}$ & $(1-\alpha^q - \frac{1}{\gamma}^q+1)$ & $ \left( \frac{\Delta_3}{uy}\right)^q -\left( \frac{\Delta_2}{uz} \right)^q $\\ 
					$R_{vyz}$ & $(\beta^q- 1)$ & $-\left( \frac{\Delta_1}{vz}\right)^q $\\ 
				$R_{wyz}$ & $(\frac{1}{\beta}^q - 1)$ & $\left( \frac{\Delta}{wy}\right)^q $\\ 
					$R_{xyz}$ & $0$ & $0$\\ 
			\end{tabular} 
		\end{center}
	\end{table} 
	Thus we have demonstrated $2\lfloor \frac{q}{q_2} - 1 \rfloor $ linearly independent elements of $H^3_{\mf{m}}(R/I^t)_0$ when the base field is characteristic $p>0$.
\end{proof} 

\pagebreak 

\bibliographystyle{alpha} 

\bibliography{main}

\end{document}